\newcommand{\ttt}{\boldsymbol \theta}
\newcommand{\TT}{\Theta}
\newcommand{\bb}{\mbox{$\mathbf b$}}
\newcommand{\dd}{\mbox{$\mathbf d$}}
\newcommand{\EE}{\mathbf 1}
\newcommand{\G}{\mbox{$\mathbb G$}}
\newcommand{\II}{\mbox{$\mathbf I$}}
\newcommand{\aaaa}{\mathbf a}
\newcommand{\hh}{\mathbf h}
\newcommand{\cc}{\mathbf c}
\newcommand{\ff}{\mathbf f}
\newcommand{\gggg}{\mathbf g}
\newcommand{\HH}{\mathbf H}
\newcommand{\R}{\mathbb R}
\newcommand{\PP}{\mathbf P}
\newcommand{\PPP}{\mathcal P}
\newcommand{\vv}{\mathbf v}
\newcommand{\x}{\mathbf x}
\newcommand{\xx}{\mathbf x}
\newcommand{\XX}{\mathbf X}
\newcommand{\ZZ}{\mathbf Z}
\newcommand{\argmax}{\operatornamewithlimits{arg\,max}}
\newtheorem{theorem}{Theorem}
\newtheorem{definition}{Definition}
\newtheorem{preexample}{Example}
\newenvironment{example}%
  {\begin{preexample}\upshape}{\end{preexample}}
\newtheorem{lemma}{Lemma}
\newtheorem{preremark}{Remark}
\newenvironment{remark}%
  {\begin{preremark}\upshape}{\end{preremark}}
\title{A Note on Likelihood Ratio Tests for Models with Latent Variables}
\author{Yunxiao Chen, \\
 London School of Economics and Political Science\\
 Irini Moustaki,\\
 London School of Economics and Political Science\\
 Haoran Zhang, Fudan University}
 \date{}
\begin{document}
\maketitle

\doublespacing

\begin{abstract}
The likelihood ratio test (LRT) is widely used for comparing the relative fit of nested latent variable models.
Following Wilks' theorem, the LRT is conducted by comparing the LRT statistic with its asymptotic distribution under the restricted model, a $\chi^2$-distribution with degrees of freedom equal to
the difference in the number of free parameters between the two nested models under comparison.
For models with latent variables such as factor analysis, structural equation models  and random effects models, however, it is often found that the $\chi^2$ approximation does not hold.
In this note, we show how the regularity conditions of Wilks' theorem may be violated using three examples of models with latent variables.
In addition, a more general theory for LRT is given that provides the correct asymptotic theory for these LRTs. This general theory was first established in \cite{chernoff1954distribution} and discussed in both \cite{van2000asymptotic} and \cite{drton2009likelihood}, but it does not seem to have received enough attention. We illustrate this general theory with the three examples.
\end{abstract}	
\noindent
KEY WORDS: Wilks' theorem, $\chi^2$-distribution, latent variable models, random effects models, dimensionality, tangent cone

\section{Introduction}
 \subsection{Literature on Likelihood Ratio Test}

The likelihood ratio test (LRT) is one of the most popular methods for comparing nested models.
When comparing two nested models that satisfy certain regularity conditions,
the $p$-value of an LRT is obtained by comparing the LRT statistic with a $\chi^2$-distribution with degrees of freedom equal to the difference in the number of free parameters between the two nested models. This reference distribution is suggested by the asymptotic theory of LRT that is known as Wilks' theorem \citep{wilks1938large}.

However, for the statistical inference of models with latent variables (e.g. factor analysis, item factor analysis for categorical data, structural equation models, random effects models, finite mixture models), it is often found that the $\chi^2$ approximation suggested by Wilks' theorem does not hold.
There are various published
studies showing that the LRT is not valid under certain violations/conditions (e.g. small sample size, wrong model under the alternative hypothesis, large number of items, non-normally distributed variables, unique variances equal to zero, lack of identifiability),  leading to over-factoring and over rejections; see e.g.
\cite{hakstian.ea:82}, \cite{liu2003asymptotics},
\cite{hayashi2007likelihood}, \cite{asparouhov.muthen:2009}, \cite{wu2016identification},
\cite{deng2018structural}, \cite{shi2018revisiting}, \cite{yang2018performance}
and \cite{auerswald.moshagen:2019}.  There is also a significant amount of literature on the effect of testing at the boundary of parameter space
that arise when testing the significance of variance components in random effects models as well as in structural equation models (SEM) with linear or nonlinear constraints \cite[see][]{stram.lee:1994,stram.lee:1995,dominicus.ea:2006,savalei.kolenikov:2008,davis2009analysis,wu2013likelihood, du2020testing}.

Theoretical investigations have shown that certain regularity conditions of Wilks' theorem are not always satisfied when comparing nested models with latent variables. \cite{takane2003ea} and \cite{hayashi2007likelihood} were among the ones who pointed out that models for which one needs to select dimensionality (e.g. principal component analysis, latent class, factor models)  have points of irregularity in their parameter space that in some cases invalidate the use of LRT. Specifically, such
issues arise in factor analysis when  comparing models with different number of factors rather than  comparing a factor model against the saturated model. The LRT for comparing a $q$-factor model against the saturated model does follow a $\chi^2$-distribution under mild conditions. However, for nested models with different number of factors ($q$-factor model is the correct one against the one with
$(q+k)$-factors), the LRT is likely not  $\chi^2$-distributed due to violation of one or more of the regularity conditions.
This is inline with the two basic assumptions required by the asymptotic theory for  factor analysis and SEM: the identifiability of the parameter vector and non-singularity of the information matrix \cite[see][and references therein]{shapiro1986}.
More specifically, \cite{hayashi2007likelihood} focus on exploratory factor analysis and on the problem that arises when the number of factors exceeds the true number of factors that might lead to rank deficiency and nonidentifiability of model parameters. That corresponds to the violations of the two regularity conditions. Those findings go back to   \cite{geweke1980interpreting} and \cite{amemiya.anderson:1990}. More specifically, \cite{geweke1980interpreting} studied the behaviour of the LRT in small samples and concluded that when the regularity conditions from Wilks' theorem are not satisfied the asymptotic theory seems to be misleading in all sample sizes considered.


\subsection{Our Contributions}

The contribution of this note is two-folds. First, we provide a discussion about situations under which
Wilks' theorem for LRT may fail. Via three examples, we provide a relatively more complete picture about this issue  in models with latent variables.
Second, we introduce a unified asymptotic theory for LRT that covers Wilks' theorem as a special case and provides the correct asymptotic reference distribution for LRT when Wilks' theorem fails. This unified theory does not seem to have received enough attention in psychometrics, even though it has been established in statistics for long  \citep{chernoff1954distribution, van2000asymptotic, drton2009likelihood}. In this note, we provide a tutorial on this theory, by presenting the theorems in a more accessible way and providing illustrative examples.

%

\subsection{Examples}
To further illustrate the issue with the classical theory for LRT, we provide three examples. These examples suggest that the $\chi^2$ approximation can perform poorly and give $p$-values that can be either more conservative or more liberal.

\begin{example}{\bf (Exploratory factor analysis).}\label{eg1}
Consider a dimensionality test in exploratory factor analysis (EFA). For ease of exposition, we consider two hypothesis testing problems, (a) testing a one-factor model against a two-factor model, and (b) testing a one-factor model against a saturated multivariate normal model with an unrestricted covariance matrix. Similar examples have been considered in \cite{hayashi2007likelihood} where similar phenomena have been studied.

\paragraph{1(a).} Suppose that we have $J$ mean-centered continuous indicators, $\XX = (X_1, ..., X_J)^\top$, which follow a $J$-variate normal distribution $N(\mathbf 0, \boldsymbol\Sigma)$. The one-factor model parameterizes $\boldsymbol\Sigma$ as
$$\boldsymbol\Sigma = \aaaa_1\aaaa_1^\top + \boldsymbol\Delta,$$
where $\aaaa_1 = (a_{11}, ..., a_{J1})^\top$ contains the loading parameters and $\boldsymbol\Delta = diag(\delta_1, ..., \delta_J)$ is diagonal matrix
with a diagonal entries $\delta_1$, ..., $\delta_J$. Here, $\boldsymbol\Delta$ is the covariance matrix for the unique factors. Similarly, the two-factor model
parameterizes $\boldsymbol\Sigma$ as
$$\boldsymbol\Sigma = \aaaa_1\aaaa_1^\top + \aaaa_2\aaaa_2^\top + \boldsymbol\Delta,$$
where  $\aaaa_2 = (a_{12}, ..., a_{J2})^\top$  contains the loading parameters for the second factor and we set $a_{12} = 0$ to ensure model identifiability. Obviously, the one-factor model is nested within the two-factor model. The comparison between these two models is equivalent to test
$$H_0: \aaaa_2 = \mathbf 0 \mbox{~versus~} H_a: \aaaa_2 \neq \mathbf 0.$$
If Wilks' theorem holds, then under $H_0$ the LRT statistic should asymptotically follow a $\chi^2$-distribution with $J-1$ degrees of freedom.

We now provide a simulated example. Data are generated from a one-factor model, with $J = 6$ indicators and $N=5000$ observations. The true parameter values are given in Table~\ref{tab:eg1}. We generate 5000 independent datasets. For each dataset, we compute the LRT for comparing the one- and two-factor models. Results are presented in panel (a) of Figure~\ref{fig:fig1}.
The black solid line shows the empirical Cumulative Distribution Function (CDF) of the LRT statistic, and the red dashed line shows the CDF of the $\chi^2$ distribution suggested by Wilks' Theorem. A substantial discrepancy can be observed between the two CDFs.  Specifically, the $\chi^2$ CDF tends to stochastically dominate the empirical CDF, implying that p-values based on this $\chi^2$ distribution tend to be more liberal.
In fact, if we reject $H_0$ at 5\% significance level based on these p-values, the actual type I error is 10.8\%.
These results suggest the  failure of Wilks' theorem in this example.



\begin{table}
  \centering
  \begin{tabular}{cccccc}
    \hline
    $a_{11}$ & $a_{21}$& $a_{31}$ & $a_{41}$& $a_{51}$ & $a_{61}$\\
    \hline
    1.17& 1.87& 1.42& 1.71& 1.23& 1.78 \\
   \hline
   \hline
    $\delta_{1}$ & $\delta_{2}$& $\delta_{3}$ & $\delta_{4}$& $\delta_{5}$ & $\delta_{6}$\\
    \hline
    1.38& 0.85& 1.46& 0.78& 1.24& 0.60\\
    \hline
  \end{tabular}
  \caption{The values of the true parameters for the simulations in Example~\ref{eg1}.}\label{tab:eg1}
\end{table}

\begin{figure}
  \centering
  \minipage{0.48\textwidth}
  \includegraphics[width=\linewidth]{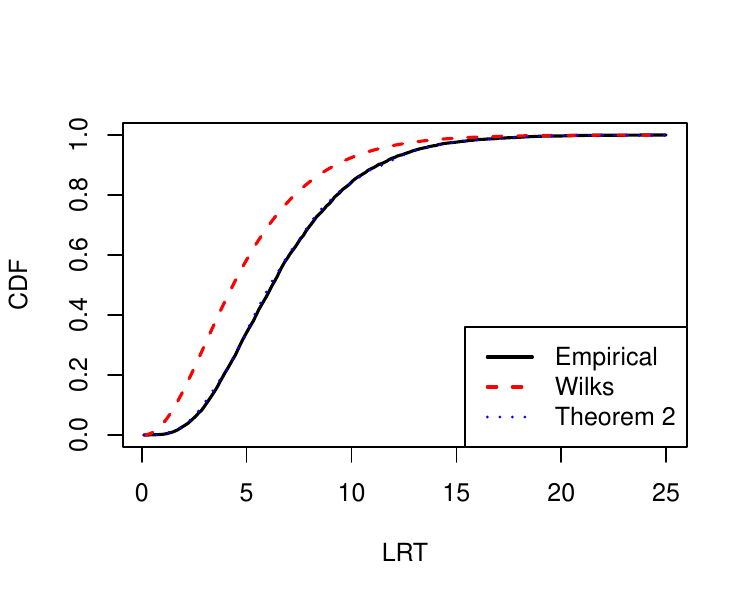}
  \subcaption{}
\endminipage\hfill
\minipage{0.48\textwidth}
  \includegraphics[width=\linewidth]{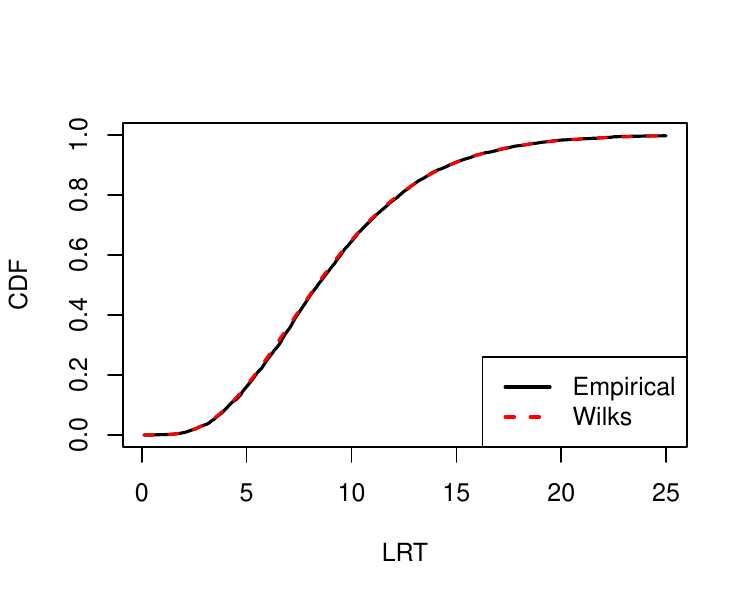}
    \subcaption{}
\endminipage\hfill
  \caption{Panel (a) shows the results of Example 1(a). The black solid line shows the empirical CDF of the LRT statistic, based on 5000 independent simulations. The red dashed line shows the CDF of the $\chi^2$ distribution with 5 degrees of freedom as suggested by Wilks' theorem. The blue dotted line shows the CDF of the reference distribution suggested by Theorem~\ref{thm:LRT2}.  Panel (b) shows the results of Example 1(b). The black solid line shows the empirical CDF of the LRT statistic, and the red dashed line shows the CDF of the $\chi^2$ distribution with 9 degrees of freedom as suggested by Wilks' theorem.
  }\label{fig:fig1}
\end{figure}

\paragraph{1(b).} When testing the one-factor model against the saturated model, the LRT statistic is asymptotically $\chi^2$ if Wilks' theorem holds.  The degrees of freedom of the $\chi^2$ distribution is $J(J+1)/2 - 2J$, where $J(J+1)/2$ is the number of free parameters in an unrestricted covariance matrix $\boldsymbol\Sigma$ and $2J$ is the number of parameters in the one-factor model.
In panel (b) of Figure~\ref{fig:fig1},
the black solid line shows the empirical CDF of the LRT statistic based on 5000 independent simulations, and the red dashed line shows the CDF of the $\chi^2$-distribution with 9 degrees of freedom.
As we can see, the two curves almost overlap with each other, suggesting that Wilks' theorem holds here.


\end{example}

\begin{example}{\bf (Exploratory item factor analysis).}\label{eg2}
 We further give an example of exploratory item factor analysis (IFA) for binary data, in which similar phenomena as those in Example~\ref{eg1} are observed. Again, we consider two hypothesis
testing problems, (a) testing a one-factor model against a two-factor model, and (b) testing a one-factor model against a saturated multinomial model for a binary random vector.

\paragraph{2(a).} Suppose that we have a $J$-dimensional response vector, $\XX = (X_1, ..., X_J)^\top$, where all the entries are binary valued, i.e., $X_j \in \{0, 1\}$. It follows a categorical distribution, satisfying
$$P(\XX = \xx) = \pi_{\xx}, \xx \in \{0,1\}^J,$$
where $\pi_\xx \geq 0$ and $\sum_{\xx \in \{0, 1\}^J} \pi_{\xx} = 1$.

The exploratory two-factor IFA model parameterizes $\pi_\xx$ by
$$\pi_{\xx} = \int\int \prod_{j=1}^J \frac{\exp(x_j(d_j + a_{j1}\xi_1 + a_{j2}\xi_2))}{1+\exp(d_j + a_{j1}\xi_1 + a_{j2}\xi_2)} \phi(\xi_1)\phi(\xi_2)d\xi_1d\xi_2,$$
where $\phi(\cdot)$ is the probability density function of a standard normal distribution.
This model is also known as a multidimensional two-parameter logistic (M2PL) model \citep{reckase2009multidimensional}.
Here, $a_{jk}$s are known as the discrimination parameters and $d_j$s are known as the easiness parameters. We denote $\aaaa_1 = (a_{11},...,a_{J1})^\top$ and $\aaaa_2 = (a_{12},...,a_{J2})^\top.$
For model identifiability, we set $a_{12} = 0$. When $a_{j2} = 0$, $j=2, ..., J$, then the two-factor model degenerates to the one-factor model.
Similar to Example 1(a), if Wilks' theorem holds, the LRT statistic should asymptotically follow a $\chi^2$-distribution with $J-1$ degrees of freedom.

Simulation results suggest the failure of this $\chi^2$ approximation. In Figure~\ref{fig:fig2}, we provide plots similar to those in Figure~\ref{fig:fig1}, based on $5000$ datasets simulated from a one-factor IFA model with sample size $N = 5000$ and $J = 6$. The true parameters of this IFA model are given  in Table~\ref{tab:eg2}.
The result is shown in panel (a) of Figure~\ref{fig:fig2}, where a similar pattern is observed as that in panel (a) of Figure~\ref{fig:fig1} for Example 1(a).



\begin{table}
  \centering
  \begin{tabular}{cccccc}
     \hline
    $d_{1}$ & $d_{2}$& $d_{3}$ & $d_{4}$& $d_{5}$ & $d_{6}$\\
    \hline
     -0.23  & -0.12& 0.07& 0.31 &-0.29 & 0.19\\
    \hline
    \hline
    $a_{11}$ & $a_{21}$& $a_{31}$ & $a_{41}$& $a_{51}$ & $a_{61}$\\
    \hline
     0.83& 1.22& 0.96& 0.91& 1.02& 1.25 \\
   \hline

  \end{tabular}
  \caption{The values of the true parameters for the simulations in Example~\ref{eg2}.}\label{tab:eg2}
\end{table}

\begin{figure}
  \centering
  \minipage{0.48\textwidth}
  \includegraphics[width=\linewidth]{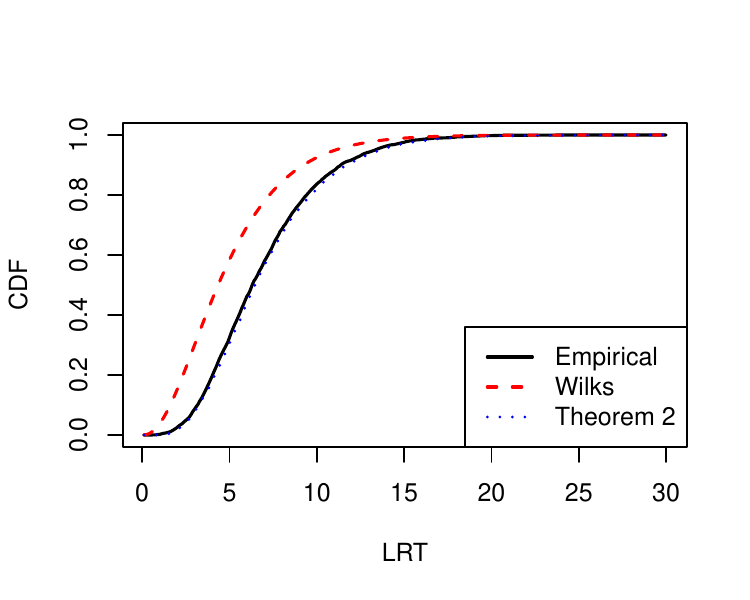}
      \subcaption{}
\endminipage\hfill
\minipage{0.48\textwidth}
  \includegraphics[width=\linewidth]{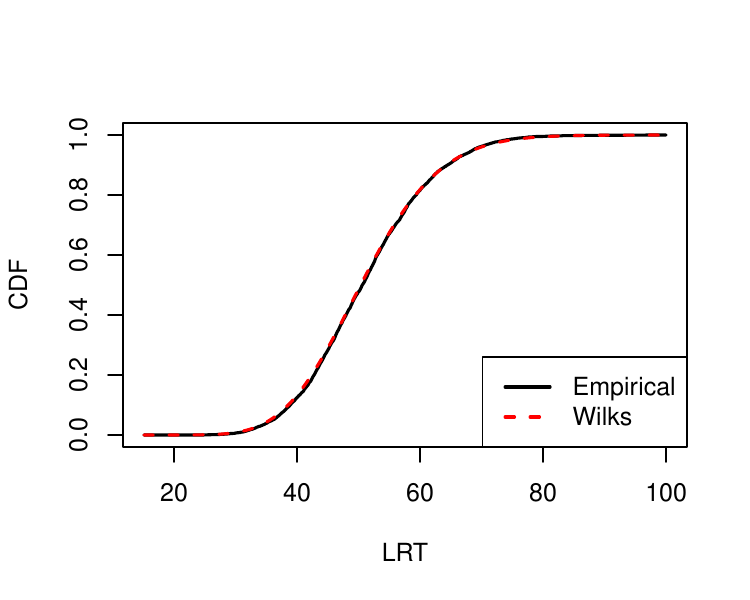}
      \subcaption{}
\endminipage
  \caption{Panel (a) shows the results of Example 2(a). The black solid line shows the empirical CDF of the LRT statistic, based on 5000 independent simulations. The red dashed line shows the CDF of the $\chi^2$-distribution with 5 degrees of freedom as suggested by Wilks' theorem. The blue dotted line shows the CDF of the reference distribution suggested by Theorem~\ref{thm:LRT2}.  Panel (b) shows the results of Example 2(b). The black solid line shows the empirical CDF of the LRT statistic, and the red dashed line shows the CDF of the $\chi^2$-distribution with 51 degrees of freedom as suggested by Wilks' theorem.
  }\label{fig:fig2}
\end{figure}

\paragraph{2(b).}
When testing the one-factor IFA model against the saturated model, the LRT statistic is asymptotically $\chi^2$ if Wilks' theorem holds, for which the degree of freedom  is $2^J-1 - 2J$. Here, $2^J-1$ is the number of free parameters in
the saturated model, and $2J$ is the number of parameters in the one-factor IFA model.
The result is given in panel (b) of Figure~\ref{fig:fig2}.
Similar to Example 1(b), the empirical CDF and the CDF implied by Wilks' theorem are very close to each other, suggesting that Wilks' theorem holds here.




\end{example}

\begin{example}{\bf (Random effects model).}\label{eg3}
Our third example considers a random intercept model.  Consider two-level data with individuals at level 1 nested within groups at level 2. Let $X_{ij}$ be data from the $j$th individual from the $i$th group, where $i = 1, ..., N$ and $j = 1, ..., J$. For simplicity, we assume all the groups have the same number of individuals. Assume the following random effects model,
$$X_{ij} = \beta_0 + \mu_i + \epsilon_{ij}, $$
where $\beta_0$ is the overall mean across all the groups, $\mu_i \sim N(0, \sigma_1^2)$ characterizes the difference between the mean for group $i$ and the overall mean, and  $\epsilon_{ij} \sim N(0, \sigma_2^2)$ is the individual level residual.

To test for between group variability under this model is equivalent to test
$$H_0: \sigma_1^2 = 0 \mbox{~against~} H_a: \sigma_1^2 > 0.$$
If Wilks' theorem holds, then the LRT statistic should follow a $\chi^2$ distribution with one degree of freedom.
We conduct a simulation study and show the results in Figure~\ref{fig:fig3}.
In this figure,
the black solid line shows the empirical CDF of the LRT statistic,
based on 5000 independent simulations from the null model with $N = 200$, $J= 20$, $\beta_0 = 0$, and $\sigma^2_2 = 1$.
The red dashed line shows the CDF of the $\chi^2$ distribution with one degree of freedom. As we can see, the two CDFs are not close to each other, and the empirical CDF tends to stochastically dominate the theoretical CDF  suggested by Wilks' theorem. It suggests the failure of Wilks' theorem in this example.


This kind of phenomenon has been observed when the null model lies on the boundary of the parameter space, due to which the regularity conditions of Wilks' theorem do not hold. The LRT statistic has been shown to often follow a mixture of $\chi^2$-distribution asymptotically \citep[e.g.,][]{shapiro1985asymptotic,self.liang:1987}, instead of a $\chi^2$-distribution. As it will be shown in Section~\ref{sec:theory}, such a mixture of $\chi^2$ distribution can be derived from  a general theory for LRT.

\begin{figure}
  \centering
  \includegraphics[scale = 0.6]{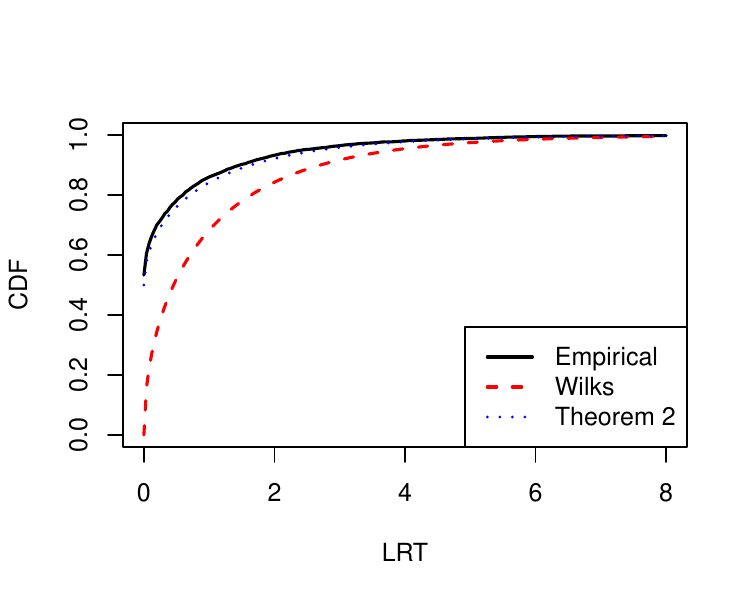}
    \caption{The black solid line shows the empirical CDF of the LRT statistic, based on 5000 independent simulations. The red dashed line shows the CDF of the $\chi^2$-distribution with one-degree of freedom as suggested by Wilks' theorem. The blue dotted line shows the CDF of the mixture of $\chi^2$-distribution suggested by Theorem~\ref{thm:LRT2}.}\label{fig:fig3}
\end{figure}
%
%
\end{example}

We now explain why Wilks' theorem does not hold in Examples 1(a), 2(a), and 3. We define some generic notations. Suppose that we have i.i.d. observations $\XX_1$, ..., $\XX_N$, from a parametric model $\mathcal P_{\Theta} = \{P_{\boldsymbol\theta}: \boldsymbol\theta  \in \Theta \subset \mathbb R^k\}$,
where $\XX_i = (X_{i1}, ..., X_{iJ})^\top.$
We assume that the distributions in $\mathcal P_{\Theta}$ are dominated by
a common $\sigma$-finite measure $\nu$ with respect to which they have probability density functions $p_{\boldsymbol\theta}: \mathbb R^J \rightarrow [0,\infty)$. Let $\Theta_0 \subset \Theta$ be a submodel and we are interested in testing
$$H_0: \ttt \in \Theta_0 \mbox{~versus~} H_a:  \ttt \in \Theta\setminus \Theta_0.$$
Let $p_{\boldsymbol\theta^* }$ be the true model for the observations, where $\boldsymbol\theta^* \in \Theta_0$.

The likelihood function is defined as
$$l_N(\ttt) = \sum_{i=1}^N \log p_{\boldsymbol\theta}(\XX_i),$$
and the LRT statistic is defined as
$$\lambda_N = 2\left(\sup_{\ttt\in \Theta} l_N(\ttt) - \sup_{\ttt\in \Theta_0} l_N(\ttt)\right).$$
Under suitable regularity conditions, Wilks' theorem suggests that the LRT statistic $\lambda_N$ is asymptotically $\chi^2$.

Wilks' theorem for LRT requires several regularity conditions; see e.g., Theorem 12.4.2, \cite{lehmann2006testing}. Among these conditions, there are two conditions that the previous examples do not satisfy.
First, it is required that $\ttt^*$ is an interior point of $\Theta$. This condition is not satisfied for Example 3, when $\Theta$ is taken to be $\{(\beta_0, \sigma_1^2, \sigma_2^2): \beta_0 \in \mathbb R, \sigma_1^2 \in [0, \infty), \sigma_2^2 \in [0, \infty)\}$,
as the null model lies on the boundary of the parameter space. Second, it is required that the expected Fisher information matrix at $\ttt^*$, $I(\boldsymbol\theta^*) = E_{\ttt^*}[\nabla l_{N}(\ttt^*)\nabla l_{N}(\ttt^*)^\top]/N$
is strictly positive-definite.
As we summarize in Lemma~\ref{lemma:lemma1},
this condition is not satisfied in Examples 1(a) and 2(a), when $\Theta$ is taken to be the parameter space of the corresponding two-factor model. However, interestingly, when comparing the one-factor model with the saturated model, the Fisher information matrix is strictly positive-definite in Examples 1(b) and 2(b), for both simulated examples.




\begin{lemma}\label{lemma:lemma1}

(1) For the two-factor model given in Example 1(a),
choose the parameter space to be $$
\TT  = \left\{ (\delta_1,...,\delta_J, a_{11},...,a_{J1},a_{22},...,a_{J2})^\top \in\R^{3J-1}:\delta_j >0, ~j=1,...,J  \right\}.
$$ If the true parameters
satisfy $a^*_{j2}=0, ~j=2,...,J,$ then $I(\ttt^*)$ is non-invertible.


(2) For the two-factor IFA model given in Example 2(a),
choose the parameter space to be $
\TT  = \R^{3J-1}.
$ If the true parameters
satisfy $a^*_{j2}=0, ~j=2,...,J,$ then $I(\ttt^*)$ is non-invertible.

\end{lemma}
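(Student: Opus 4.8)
The plan is to exploit the fact that $I(\ttt^*) = E_{\ttt^*}[\nabla\log p_{\ttt^*}(\XX)\,\nabla\log p_{\ttt^*}(\XX)^\top]$ is the Gram matrix of the coordinates of the score vector $\nabla\log p_{\ttt^*}(\XX)$, regarded as elements of $L^2(P_{\ttt^*})$. Consequently $I(\ttt^*)$ is singular as soon as one coordinate of the score vanishes $P_{\ttt^*}$-almost everywhere, since then the corresponding row and column of $I(\ttt^*)$ are identically zero. I would therefore prove non-invertibility in both parts by showing that the score components associated with the free second-factor loadings $a_{22},\dots,a_{J2}$ all vanish at $\ttt^*$. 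The reason common to both models is that the second factor enters as an even function of its loadings at the null point, so the first-order (odd) derivatives in these directions are zero.

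For part (1), I would first note that $\log p_\ttt(\xx)$ depends on $\ttt$ only through $\boldsymbol\Sigma$, so by the chain rule it suffices to examine $\partial_{a_{j2}}\boldsymbol\Sigma$. Since $\boldsymbol\Sigma = \aaaa_1\aaaa_1^\top + \aaaa_2\aaaa_2^\top + \boldsymbol\Delta$ depends on $\aaaa_2$ only through the quadratic term $\aaaa_2\aaaa_2^\top$, one has $\partial_{a_{j2}}\boldsymbol\Sigma = \ee_j\aaaa_2^\top + \aaaa_2\ee_j^\top$, which vanishes at $\aaaa_2^* = \mathbf 0$. Hence every score coordinate $\partial_{a_{j2}}\log p_{\ttt^*}(\xx)$ is identically zero, producing $J-1$ zero columns in $I(\ttt^*)$ and establishing singularity.

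For part (2), the argument is analogous but needs a short computation. Writing $\eta_j = d_j + a_{j1}\xi_1 + a_{j2}\xi_2$ and differentiating under the integral sign, I would obtain
$$\partial_{a_{j2}}\pi_\xx = \int\!\!\int (x_j - \sigma(\eta_j))\,\xi_2 \prod_{k=1}^J \frac{\exp(x_k\eta_k)}{1+\exp(\eta_k)}\,\phi(\xi_1)\phi(\xi_2)\,d\xi_1 d\xi_2,$$
where $\sigma$ is the logistic function. Evaluated at $\aaaa_2^* = \mathbf 0$, each $\eta_k = d_k + a_{k1}\xi_1$ is free of $\xi_2$, so the integrand factors into a function of $\xi_1$ times the single linear factor $\xi_2$; integrating against $\phi(\xi_2)$ and using $\int \xi_2\,\phi(\xi_2)\,d\xi_2 = 0$ gives $\partial_{a_{j2}}\pi_{\xx} = 0$ for every $\xx$. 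Because $\partial_{a_{j2}}\log\pi_{\xx} = \partial_{a_{j2}}\pi_{\xx}/\pi_{\xx}$, the corresponding score coordinates again vanish, and $I(\ttt^*)$ is singular.

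The main obstacle is less the vanishing itself than the surrounding regularity bookkeeping: I would need to justify differentiation under the integral sign in part (2) via a dominated-convergence argument, using that the integrand and its $a_{j2}$-derivative are bounded by integrable functions uniformly near $\ttt^*$, and to confirm that fixing $a_{12}=0$ leaves exactly the $J-1$ directions $a_{22},\dots,a_{J2}$ as the ones whose scores vanish. It is worth emphasizing that this degeneracy is a first-order phenomenon: the information loss is not a generic near-singularity but an exact rank drop caused by the model being locally flat in the second-factor loadings, which is precisely why the reference distribution in Theorem~\ref{thm:LRT2} departs from the naive $\chi^2$.
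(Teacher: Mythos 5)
Your proposal is correct and follows essentially the same route as the paper's own proof: in both parts you show the score components in the free second-factor directions $a_{22},\dots,a_{J2}$ vanish identically at $\ttt^*$ (via the chain rule through $\boldsymbol\Sigma$ in part (1), and via the factored integral with $\int \xi_2\,\phi(\xi_2)\,d\xi_2 = 0$ in part (2)), so that the corresponding rows and columns of the Gram matrix $I(\ttt^*)$ are zero. Your added remarks on justifying differentiation under the integral sign are sound and, if anything, slightly more careful than the paper's argument.
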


We remark on the consequences of having a non-invertible information matrix. The first consequence is computational. If the information matrix is non-invertible, then the likelihood function does not tend to be strongly convex near the MLE, resulting in slow convergence.
In the context of Examples 1(a) and 2(a), it means that computing the MLE for the corresponding two-factor models may have convergence issue.
When convergence issue occurs, the obtained LRT statistic is below its actual value,
due to the log-likelihood for the two-factor model not achieving the maximum. Consequently, the $p$-value tends to be larger than its actual value, and thus the decision based on the $p$-value tends to be more conservative than the one without convergence issue.
This convergence issue is observed when conducting simulations for these examples. To improve the convergence, we use multiple random starting points when computing MLEs. The second consequence is a poor asymptotic convergence rate for the MLE. That is, the convergence rate is typically much slower than the standard parametric rate $N^{-1/2}$, even though the MLE is still consistent; see \cite{rotnitzky2000likelihood} for more theoretical results on this topic.

We further provide some remarks on the LRT in Examples 1(b) and 2(b) that use a LRT for comparing the fitted model with the saturated model. Although Wilks' theorem holds asymptotically in example 2(b), the $\chi^2$  approximation may not always work well as in our simulated example. This is because, when the number of items becomes larger and the sample size is not large enough, the contingency table for all $2^J$ response patterns may be sparse and thus the saturated model cannot be accurately estimated. In that case, it is better to use a limited-information inference method  \citep[e.g.][]{maydeu2005limited,maydeu2006limited} as a goodness-of-fit test statistic.
Similar issues might also occur to Example 1(b).



\section{General Theory for Likelihood Ratio Test}\label{sec:theory}

The previous discussions suggest that  Wilks' theorem does not hold for Examples 1(a), 2(a), and 3, due to the violation of regularity conditions. It is then natural to ask: what asymptotic distribution does $\lambda_N$ follow in these situations? Is there asymptotic theory characterizing such irregular situations? The answer to these questions is ``yes". In fact, a general theory characterizing these less regular situations has already been established in \cite{chernoff1954distribution}. In what follows, we provide a version of this general theory that is proven in \cite{van2000asymptotic}, Theorem 16.7. It is also given in \cite{drton2009likelihood}, Theorem 2.6. Two problems will be considered, (1) comparing a submodel with the saturated model as in Examples 1(b) and 2(b), and (2) comparing two submodels as in Examples 1(a), 2(a), and 3.

\subsection{Testing Submodel against Saturated Model}

We first introduce a few notations.
We use $\R^{J\times J}_{pd}$ and $\R^{J\times J}_d$ to denote the spaces of  $J\times J$   strictly positive definite matrices and diagonal matrices, respectively.
In addition, we define a one-to-one mapping $\rho$: $\mathcal \R_{pd}^{J\times J} \mapsto \mathbb R^{J(J+1)/2}$, that maps a positive definite matrix to a vector containing all its
upper triangular entries (including the diagonal entries). That is,
$\rho(\Sigma) = (\sigma_{11}, \sigma_{12}..., \sigma_{1J}, \sigma_{22}, ..., \sigma_{2J}, ..., \sigma_{JJ})^\top$,
for $\Sigma = (\sigma_{ij})_{J\times J} \in \mathcal \R_{pd}^{J\times J}$.
We also define a one-to-one mapping $\mu$: $\R_{d}^{J\times J} \mapsto \mathbb R^{J}$, that maps a diagonal matrix to a vector containing all its diagonal entries.

We consider to compare a submodel versus the saturated model. Let $\Theta_0$ and $\Theta$ be the parameter spaces of the submodel and the saturated model, respectively, satisfying $\Theta_0 \subset \Theta \subset \mathbb R^k$. Also let
$\ttt^* \in \Theta_0$ be the true parameter vector. The asymptotic theory of the LRT for comparing $\Theta_0$ versus
$\Theta$ requires regularity conditions C1-C5 below.


\begin{enumerate}
  \item[C1.] The true parameter vector $\ttt^*$ is in the interior of $\TT$.
  \item[C2.] There exists a measurable map $\dot l_{\ttt}:\R^J \to \R^k$ such that
  \begin{equation}\label{eq:quad}
  \lim_{\hh\to\bf0} \frac{1}{\|\hh\|^2} \int_{\R^J} \left( \sqrt{p_{\ttt+\hh}(\xx)} - \sqrt{p_{\ttt}(\xx)} - \frac{1}{2}\hh^\top\dot l_{\ttt}(\xx)\sqrt{p_{\ttt}(\xx)} \right)^2 d\nu(\xx) = 0,
  \end{equation} and the Fisher-information matrix $I(\boldsymbol\theta^*)$ for $\PPP_{\TT}$ is invertible.
  \item[C3.] There exists a neighborhood of $\ttt^*$, $U_{\ttt^*} \subset \TT$,  and a measurable function $\dot l: \R^J \to \R$, square integrable as $\int_{\R^J}\dot l(\xx)^2 dP_{\ttt^*}(\xx) < \infty,$  such that $$
  \vert  \log p_{\ttt_1}(\xx) - \log p_{\ttt_2}(\xx)  \vert \leq \dot l(\xx) \| \ttt_1 - \ttt_2 \|, \quad \forall \ttt_1,\ttt_2 \in U_{\ttt^*}.
  $$
  \item[C4.] The maximum likelihood estimators (MLE) $$\hat \ttt_{N, \TT} = \argmax_{\ttt\in \Theta} l_N(\ttt)$$
  and $$\hat \ttt_{N, \TT_0} = \argmax_{\ttt\in \Theta_0} l_N(\ttt)$$ are consistent under $P_{\ttt^*}.$
\end{enumerate}

The asymptotic distribution of $\lambda_N$ depends on the local geometry of the parameter space $\Theta_0$ at $\ttt^*$. This is characterized by the tangent cone $T_{\Theta_0}(\ttt^*)$, to be defined below.

\begin{definition}\label{def:1}
The tangent cone $T_{\TT_0}(\ttt^*)$ of the set $\TT_0 \subset \R^k$ at the point $\ttt^* \in \R^k$ is the set of vectors in $\R^k$ that are limits of sequences $\alpha_n(\ttt_n - \ttt^*),$ where $\alpha_n$ are positive reals and $\ttt_n \in \TT_0$ converge to $\ttt^*$.
\end{definition}
The following regularity is required for the tangent cone $T_{\Theta_0}(\ttt^*)$ that is known as the Chernoff-regularity.

\begin{enumerate}
  \item[C5.] For every vector $\boldsymbol \tau$ in the tangent cone $T_{\TT_0}(\ttt^*)$ there exist $\epsilon>0$ and a map $\boldsymbol\alpha:[0,\epsilon)\to \TT_0$ with $\boldsymbol\alpha(0) = \ttt^*$ such that $\boldsymbol \tau = \lim_{t\to 0+}[\boldsymbol\alpha(t) - \boldsymbol\alpha(0)]/t.$
\end{enumerate}


Under the above regularity conditions, Theorem~\ref{thm:LRT} below holds and explains the phenomena in Examples 1(b) and 2(b).

\begin{theorem}\label{thm:LRT}

  Suppose that conditions C1-C5 are satisfied for comparing nested models $\Theta_0 \subset \Theta \subset \mathbb R^k$, with $\ttt^* \in \Theta_0$ being the true parameter vector.
Then as $N$ grows to infinity,
the likelihood ratio statistic $\lambda_N$ converges to the distribution of
\begin{align}\label{eq:dist_general}
\min_{\boldsymbol\tau \in T_{\TT_0}(\ttt^*)} \| \mathbf Z - I(\ttt^*)^{\frac{1}{2}}\boldsymbol\tau \|^2,
\end{align}
where $\mathbf Z = (Z_1, ..., Z_k)^\top$ is a random vector consisting of i.i.d. standard normal random variables.

\end{theorem}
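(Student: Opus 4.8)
The plan is to reduce the comparison of the two suprema of $l_N$ to a single quadratic minimization over the tangent cone, by localizing around $\ttt^*$ and invoking local asymptotic normality. First I would rescale the parameter as $\ttt = \ttt^* + \hh/\sqrt N$ and introduce the normalized score $\Delta_N = N^{-1/2}\sum_{i=1}^N \dot l_{\ttt^*}(\XX_i)$. Condition~C2 (differentiability in quadratic mean) together with the Lipschitz domination in~C3 yields the LAN expansion
\begin{equation*}
2\bigl(l_N(\ttt^*+\hh/\sqrt N) - l_N(\ttt^*)\bigr) = 2\hh^\top \Delta_N - \hh^\top I(\ttt^*)\hh + o_P(1),
\end{equation*}
valid uniformly over $\hh$ in compact sets; by the central limit theorem $\Delta_N \overset{d}{\to} I(\ttt^*)^{1/2}\mathbf Z$, so $I(\ttt^*)^{-1/2}\Delta_N \overset{d}{\to} \mathbf Z$ (invertibility of $I(\ttt^*)$ is guaranteed by~C2).

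Next I would complete the square. Writing $A = I(\ttt^*)$, the right-hand side equals $\Delta_N^\top A^{-1}\Delta_N - \|A^{1/2}\hh - A^{-1/2}\Delta_N\|^2 + o_P(1)$. Because $\ttt^*$ is interior to $\TT$ by~C1, the localized version of $\TT$ fills out all of $\R^k$, so $\sup_{\ttt\in\TT}$ amounts to minimizing $\|A^{1/2}\hh - A^{-1/2}\Delta_N\|^2$ over $\hh\in\R^k$, which is zero; hence $2(\sup_{\TT} l_N - l_N(\ttt^*)) = \Delta_N^\top A^{-1}\Delta_N + o_P(1)$. For the restricted supremum, the admissible local directions are those $\hh$ for which $\ttt^*+\hh/\sqrt N$ can be approximated by points of $\TT_0$, i.e.\ the tangent cone $T_{\TT_0}(\ttt^*)$, giving
\begin{equation*}
2\bigl(\sup_{\ttt\in\TT_0} l_N - l_N(\ttt^*)\bigr) = \Delta_N^\top A^{-1}\Delta_N - \min_{\boldsymbol\tau\in T_{\TT_0}(\ttt^*)}\|A^{1/2}\boldsymbol\tau - A^{-1/2}\Delta_N\|^2 + o_P(1).
\end{equation*}
Subtracting, the common term cancels, so $\lambda_N = \min_{\boldsymbol\tau\in T_{\TT_0}(\ttt^*)}\|A^{-1/2}\Delta_N - A^{1/2}\boldsymbol\tau\|^2 + o_P(1)$, and passing to the limit via the continuous mapping theorem delivers the stated distribution $\min_{\boldsymbol\tau\in T_{\TT_0}(\ttt^*)}\|\mathbf Z - I(\ttt^*)^{1/2}\boldsymbol\tau\|^2$.

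The step needing the most care—and where conditions~C4 and~C5 enter—is justifying that maximization over the curved set $\TT_0$ is asymptotically equivalent to minimization over its tangent cone, which requires controlling two inclusions. For the upper bound on $\sup_{\TT_0} l_N$, any near-maximizing sequence $\ttt_N\in\TT_0$ must, by the consistency in~C4 and a standard rate argument, satisfy $\sqrt N(\ttt_N-\ttt^*)=O_P(1)$, and every subsequential limit of $\sqrt N(\ttt_N-\ttt^*)$ lies in $T_{\TT_0}(\ttt^*)$ by the very definition of the tangent cone; this shows the restricted supremum cannot exceed the tangent-cone minimization in the limit. For the reverse bound I would invoke the Chernoff regularity~C5: given any $\boldsymbol\tau\in T_{\TT_0}(\ttt^*)$, the curve $\boldsymbol\alpha$ of~C5 lies in $\TT_0$ and satisfies $\boldsymbol\alpha(1/\sqrt N) = \ttt^* + \boldsymbol\tau/\sqrt N + o(1/\sqrt N)$, so evaluating the uniform LAN expansion along $\hh_N = \sqrt N(\boldsymbol\alpha(1/\sqrt N)-\ttt^*)\to\boldsymbol\tau$ shows the direction $\boldsymbol\tau$ is asymptotically attainable within $\TT_0$. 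Combining the two inclusions yields epiconvergence of the localized restricted criterion to the quadratic functional on $T_{\TT_0}(\ttt^*)$, hence convergence of the minima, and the argmax/continuous-mapping machinery then produces the weak limit. I expect this epiconvergence argument—in particular the tightness and rate control ensuring the localized maximizers do not escape, together with the two-sided tangent-cone approximation—to be the main obstacle, whereas the algebraic completion of squares and the LAN expansion itself are routine consequences of~C2 and~C3.
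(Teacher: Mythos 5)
Your proposal is correct and takes essentially the same route as the paper: the paper proves Theorem~\ref{thm:LRT} by citing Theorem 2.6 of \cite{drton2009likelihood} (equivalently Theorem 16.7 of \cite{van2000asymptotic}), and your localization $\hh=\sqrt{N}(\ttt-\ttt^*)$, uniform LAN expansion from C2--C3, completion of squares, and two-sided tangent-cone approximation (outer inclusion from the cone's definition plus the $O_P(1/\sqrt{N})$ rate via C4, inner inclusion from Chernoff regularity C5) reproduce exactly that cited argument. Indeed these are the same steps the paper writes out in its sketch proof of Theorem~\ref{thm:LRT2}, of which your case is the specialization $\TT_1=\TT$ with $T_{\TT}(\ttt^*)=\R^k$ by C1, so the second minimization vanishes just as in your unrestricted-supremum step.
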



\begin{remark}
We give some remarks on the regularity conditions.
Conditions C1-C4 together ensure the asymptotic normality for $\sqrt{N}(\hat\ttt_{N,\TT}-\ttt^*)$.
Condition C1 depends on both the true model and the saturated model. As will be shown below, this condition holds for the saturated models in Examples 1(b) and 2(b).
Equation \eqref{eq:quad} in C2 is also known as the condition of ``differentiable in quadratic mean'' for $\PPP_{\TT}$ at $\ttt^*.$
If the map $\ttt \mapsto \sqrt{p_{\ttt}(\xx)}$ is continuously differentiable for every $\xx,$ then C2 holds with $\dot l_{\ttt}(\xx) = \frac{\partial}{\partial \ttt}\log p_{\ttt}(\xx)$ (Lemma 7.6, \cite{van2000asymptotic}). Furthermore, C3 holds if $\dot l(\xx) = \sup_{\ttt\in U_{\ttt^*}}\dot l_{\ttt}(\xx)$  is square integrable with respect to the measure $P_{\ttt^*}.$ Specifically, if $\dot l(\xx)$ is a bounded function, then C3 holds.
C4 holds for our examples by Theorem 10.1.6, \cite{casella2002statistical}. C5 requires certain regularity on the local geometry of $T_{\TT_0}(\ttt^*),$ which also holds for our examples below.
\end{remark}

\begin{remark}
By Theorem \ref{thm:LRT}, the asymptotic distribution for $\lambda_N$ depends on the tangent cone $T_{\TT_0}(\ttt^*).$ If $T_{\TT_0}(\ttt^*)$ is a linear subspace of $\R^k$ with dimension $k_0$, then one can easily show that the asymptotic reference distribution of $\lambda_N$ is
$\chi^2$  with degrees of freedom $k-k_0$. 
As we explain below,  Theorem~\ref{thm:LRT} directly applies to Examples \ref{eg1}(b) and \ref{eg2}(b).
If $T_{\TT_0}(\ttt^*)$ is a convex cone, then $\lambda_N$ converges to a mixture of $\chi^2$ distribution \citep{shapiro1985asymptotic,self.liang:1987}. That is, for any $x > 0$,  $P(\lambda_N \leq x)$ converges to $\sum_{i=0}^k w_k P(\xi_i  \leq x)$, as $N$ goes to infinity, where $\xi_0 \equiv 0$ and $\xi_i$ follows a $\chi^2$-distribution with $i$ degrees of freedom for $i > 0$. Moreover, the weights  sum up to 1/2 for the components with even degrees of freedom, and so do the weights for the components with odd degrees of freedom \citep{shapiro1985asymptotic}.


%
%
%
%
\end{remark}


 \begin{example}{\bf (Exploratory factor analysis, revisited).}
Now we consider Example~\ref{eg1}(b). As the saturated model is a $J$-variate normal distribution with an unrestricted covariance matrix, its parameter space can be chosen as




$$
\TT = \{ \rho(\boldsymbol\Sigma) : \boldsymbol\Sigma \in \R_{pd}^{J\times J} \} \subset \R^{J(J+1)/2},
$$ and the parameter space for the restricted model is $$
\TT_0 = \left\{ \rho(\boldsymbol\Sigma): \boldsymbol\Sigma =  \aaaa_1\aaaa_1^\top + \boldsymbol\Delta,~ \aaaa_1 \in \R^J, \boldsymbol\Delta \in \R_{pd}^{J\times J} \cap \R_{d}^{J\times J} \right\}.
$$ Suppose $\ttt^* = \rho(\boldsymbol\Sigma^*) \in \TT_0,$ where $\boldsymbol\Sigma^* = {\aaaa_1^*}{\aaaa_1^*}^\top + \boldsymbol\Delta^*.$ It is easy to see that C1 holds with the current choice of $\TT.$
The tangent cone $T_{\TT_0}(\ttt^*)$ takes the form:
$$
T_{\TT_0}(\ttt^*) = \left\{\rho(\boldsymbol\Sigma): \boldsymbol\Sigma = {\aaaa_1^*}\bb_1^\top + \bb_1{\aaaa_1^*}^\top + \mathbf B, ~ \bb_1 \in \R^{J}, \mathbf B \in \R_{d}^{J\times J}  \right\},
$$ which is a linear subspace of $\R^{J(J+1)/2}$ with dimension $2J,$ as long as $a^*_{j1} \neq 0, ~j=1,...,J.$
By Theorem \ref{thm:LRT}, $\lambda_N$ converges to the $\chi^2$-distribution with degrees of freedom $J(J+1)/2 - 2J.$

\end{example}

\begin{example}{\bf (Exploratory item factor analysis, revisited).}
Now we consider Example~\ref{eg2}(b). As the saturated model is a $2^J$-dimensional categorical distribution, its parameter space can be chosen as $$
\TT = \left\{  \ttt = \{\theta_{\xx}\}_{\xx\in\Gamma_J}: \theta_{\xx} \geq 0, \sum_{\xx\in\Gamma_J}\theta_{\xx} \leq 1
\right\} \subset \R^{2^J - 1},
$$ where $\Gamma_J := \{0,1\}^J \backslash \{(0,...,0)^\top\}.$
Then, the parameter space for the restricted model is
\begin{equation}
\begin{aligned}
\TT_0 = \left\{ \ttt \in \TT: \theta_{\xx} = \int \prod_{j=1}^J \frac{\exp(x_{j}(d_j + a_{j1}\xi_1))}{1+\exp(d_j + a_{j1}\xi_1)} \phi(\xi_1)d\xi_1, ~ \aaaa_1,\dd \in \R^J \right\}.
\end{aligned}
\end{equation}
Let $\ttt^* \in \TT_0$ that corresponds to true item parameters $\aaaa_1^* = (a^*_{j1},...,a^*_{J1})^\top$ and $\dd^* = (d^*_{1},...,d^*_{J})^\top.$
By the form of $\TT_0,$ $\ttt^*$ is an interior point of $\TT.$
For any $\xx \in \Gamma_J,$ we define $\ff_\xx = (f_{1}(\xx),...,f_{J}(\xx))^\top$ and $\gggg_{\xx} = (g_{1}(\xx),...,g_{J}(\xx))^\top,$ where $$
f_{l}(\xx) = \int \prod_{j=1}^J \frac{\exp(x_{j}(d^*_{j} + a^*_{j1}\xi_1))}{1+\exp(d^*_{j} + a^*_{j1}\xi_1)} \left[ x_l - \frac{\exp(d^*_{l} + a^*_{l1}\xi_1)}{1+\exp(d^*_{l} + a^*_{l1}\xi_1)} \right] \phi(\xi_1) d\xi_1,
$$ and $$
g_{l}(\xx) = \int \prod_{j=1}^J \frac{\exp(x_{j}(d^*_{j} + a^*_{j1}\xi_1))}{1+\exp(d^*_{j} + a^*_{j1}\xi_1)} \left[ x_l - \frac{\exp(d^*_{l} + a^*_{l1}\xi_1)}{1+\exp(d^*_{l} + a^*_{l1}\xi_1)} \right] \xi_1\phi(\xi_1) d\xi_1,
$$ for $l=1,...,J.$
Then the tangent cone $T_{\TT_0}(\ttt^*)$ has the form $$
T_{\TT_0}(\ttt^*) = \left\{\ttt = \{\theta_{\xx}\}_{\xx\in \Gamma_J}: \theta_{\xx} = \bb_0^\top \ff_{\xx} + \bb_1^\top\gggg_{\xx},~\bb_0,\bb_1\in \R^J \right\},
$$
which is a linear subspace of $\R^{2^J-1}$ with dimension $2J.$
By Theorem \ref{thm:LRT}, $\lambda_N$ converges to the distribution of $\chi^2$  with degrees of freedom $2^{J}-1 - 2J.$

\end{example}

\subsection{Comparing Two Nested Submodels}

Theorem~\ref{thm:LRT} is not applicable to Example 3, because $\ttt^*$ is on the boundary of $\TT$ if $\TT$ is chosen to be $\{(\beta_0, \sigma_1^2, \sigma_2^2): \beta_0 \in \mathbb R, \sigma_1^2 \in [0, \infty), \sigma_2^2 \in [0, \infty)\},$ and thus C1 is violated. Theorem~\ref{thm:LRT} is also not applicable to Examples 1(a) and 2(a), because the Fisher information matrix is not invertible when $\Theta$ is chosen to be the parameter space of the two-factor EFA and IFA models, respectively, in which case
condition C2 is violated.

To derive the asymptotic theory for such problems, we view them as a problem of  testing nested submodels under a saturated model for which $\ttt^*$ is an interior point of $\TT$ and the information matrix is invertible. Consider testing
$$H_0: \ttt \in \Theta_0 \mbox{~versus~} H_a:  \ttt \in \Theta_1\setminus \Theta_0,$$
where $\Theta_0$ and $\Theta_1$ are two nested submodels of a saturated model $\Theta$, satisfying $\Theta_0 \subset \Theta_1 \subset \Theta \subset \mathbb R^{k}$. Under this formulation, Theorem~\ref{thm:LRT2} below provides the asymptotic  theory for the LRT statistic $\lambda_N = 2\left(\sup_{\ttt\in \Theta_1} l_N(\ttt) - \sup_{\ttt\in \Theta_0} l_N(\ttt)\right)$.

To obtain the asymptotic distribution of $\lambda_N$, regularity conditions C1-C5 are still required for $\Theta_0 \subset \Theta$. Two additional conditions are needed for $\Theta_1$, which are satisfied for Examples 6, 7 and 8 below.

\begin{enumerate}
\item[C6.] The MLE under $\Theta_1$,
$\hat \ttt_{N, \TT_1} = \argmax_{\ttt\in \Theta_1} l_N(\ttt)$, is consistent under $P_{\ttt^*}.$
  \item[C7.] Let $T_{\TT_1}(\ttt^*)$ be the  tangent cone for $\Theta_1$, defined the same as in Definition~\ref{def:1} but with $\Theta_0$ replaced by $\Theta_1$. $T_{\TT_1}(\ttt^*)$ satisfies Chernoff regularity. That is, for every vector $\boldsymbol \tau$ in the tangent cone $T_{\TT_1}(\ttt^*)$ there exist $\epsilon>0$ and a map $\boldsymbol\alpha:[0,\epsilon)\to \TT_1$ with $\boldsymbol\alpha(0) = \ttt^*$ such that $\boldsymbol \tau = \lim_{t\to 0+}[\boldsymbol\alpha(t) - \boldsymbol\alpha(0)]/t.$
\end{enumerate}




\begin{theorem}\label{thm:LRT2}

Let $\ttt^* \in \TT_0$ be the true parameter vector. Suppose that conditions C1-C7 are satisfied.
As $N$ grows to infinity,
the likelihood ratio statistic $\lambda_N$ converges to the distribution of
\begin{equation}\label{eq:dist_nested}
\begin{aligned}
\min_{\boldsymbol\tau \in T_{\TT_0}(\ttt^*)} \| \mathbf Z - I(\ttt^*)^{\frac{1}{2}}\boldsymbol\tau \|^2 -
\min_{\boldsymbol\tau \in T_{\TT_1}(\ttt^*)} \| \mathbf Z - I(\ttt^*)^{\frac{1}{2}}\boldsymbol\tau \|^2,
\end{aligned}
\end{equation}
where $\mathbf Z = (Z_1, ..., Z_k)^\top$ is a random vector consisting of i.i.d. standard normal random variables, and
$I(\ttt^*)^{\frac{1}{2}}$ satisfies $I(\ttt^*)^{\frac{1}{2}} (I(\ttt^*)^{\frac{1}{2}})^\top = I(\ttt^*)$ that can be obtained by eigenvalue decomposition.

\end{theorem}


\begin{example}{\bf (Random effects model, revisited).}
Now we consider Example~\ref{eg3}.
Let $\EE_n$ denote a length-$n$ vector whose entries are all 1, and $\II_n$ denote the $n\times n$ identity matrix.
As $\XX_i = (X_{i1},...,X_{iJ})^\top$ from the random effects model is multivariate normal with mean $\beta_0\EE_J$ and covariance matrix $\sigma_1^2\EE_J\EE_J^\top+ \sigma_2^2\II_J,$ the saturated parameter space can be taken as
$$
\TT = \{ (\rho(\boldsymbol\Sigma)^\top,\beta_0)^\top: \boldsymbol\Sigma \in \R^{J\times J}_{pd}, \beta_0\in \R \}.
$$ The parameter space for restricted models are $$
\TT_0 = \{ (\rho(\boldsymbol\Sigma)^\top,\beta_0)^\top:\boldsymbol\Sigma = \sigma_2^2\II_J,~ \sigma_2^2 >0, \beta_0\in \R \},
$$ and $$
\TT_1 = \{ (\rho(\boldsymbol\Sigma)^\top,\beta_0)^\top:\boldsymbol\Sigma = \sigma_1^2\EE_J\EE_J^\top + \sigma_2^2\II_J, \sigma_1^2\geq0, \sigma_2^2>0, \beta_0\in \R \}.
$$ Let $\ttt^* = (\rho(\boldsymbol\Sigma^*),\beta^*_0) \in \TT_0,$ where $\boldsymbol\Sigma^* = {\sigma_2^*}^2\II_J.$ 
Then, C1 holds.
The tangent cones for $\TT_0$ and $\TT_1$ are $$
T_{\TT_0}(\ttt^*) = \{ (\rho(\boldsymbol\Sigma)^\top,b_0)^\top:\boldsymbol\Sigma = b_2\II_J,~ b_0,b_2\in \R \}
$$ and $$
T_{\TT_1}(\ttt^*) = \{ (\rho(\boldsymbol\Sigma)^\top,b_0)^\top:\boldsymbol\Sigma = b_1\EE_J\EE_J^\top + b_2\II_J,~ b_1\geq0, b_0,b_2\in \R \}.
$$
By Theorem \ref{thm:LRT2}, $\lambda_N$ converges to the distribution of \eqref{eq:dist_nested}.

In this example, the form of \eqref{eq:dist_nested} can be simplified, thanks to the forms of $T_{\TT_0}(\ttt^*)$ and $T_{\TT_1}(\ttt^*).$
We denote $$
\cc_0 = (0,...,0,1), ~\cc_1 = (\rho(\EE_J\EE_J^\top)^\top,0)^\top,~\cc_2 = (\rho(\II_J)^\top,0)^\top \in \R^{J(J+1)/2+1}.
$$ It can be seen that $T_{\TT_0}(\ttt^*)$ is a 2-dimensional linear subspace spanned by $\{\cc_0,\cc_2\},$ and $T_{\TT_1}(\ttt^*)$ is a half 3-dimensional linear subspace defined as $\{\alpha_0\cc_0+\alpha_1\cc_1+\alpha_2\cc_2: \alpha_1\geq0,\alpha_0,\alpha_2\in \R\}.$
Let $\PP_0$ denote the projection onto $T_{\TT_0}(\ttt^*).$ Define $$
\vv = \frac{\cc_1 - \PP_0\cc_1}{\|\cc_1 - \PP_0\cc_1\|},
$$ and then \eqref{eq:dist_nested} has the form
\begin{equation}\label{eq:mixchi}
\|\vv^\top \ZZ\|^21_{\{\vv^\top \ZZ \geq 0\}}.
\end{equation}
It is easy to see that $\vv^\top\ZZ$ follows standard normal distribution.
Therefore, $\lambda_N$ converges to the distribution of $w^21_{\{w\geq0\}},$ where $w$ is a standard normal random variable. This is known as a mixture of $\chi^2$-distribution.
The blue dotted line in Figure~\ref{fig:fig3} shows the CDF of this mixture $\chi^2$-distribution. This CDF is very close to the empirical CDF of the LRT, confirming our asymptotic theory.



\end{example}

\begin{example}{\bf (Exploratory factor analysis, revisited).}
Now we consider Example~\ref{eg1}(a). Let $\TT,\TT_0,\ttt^*$ and $T_{\TT_0}(\ttt^*)$ be the same as those in Example 4.
In addition, we define $$
\TT_1 = \left\{ \rho(\boldsymbol\Sigma): \boldsymbol\Sigma =  \aaaa_1\aaaa_1^\top + \aaaa_2\aaaa_2^\top + \boldsymbol\Delta,~ \aaaa_1,\aaaa_2 \in \R^J, a_{12}=0, \boldsymbol\Delta \in \R_{pd}^{J\times J} \cap \R_{d}^{J\times J} \right\}.
$$ The tangent cone of $\TT_1$ at $\ttt^*$ becomes
$$
T_{\TT_1}(\ttt^*) = \left\{\rho(\boldsymbol\Sigma): \boldsymbol\Sigma = {\aaaa_1^*}\bb_1^\top + \bb_1{\aaaa_1^*}^\top + \bb_2\bb_2^\top + \mathbf B, ~ \bb_1,\bb_2 \in \R^{J}, b_{12}=0, \mathbf B \in \R_{d}^{J\times J}  \right\}.
$$
Note that $T_{\TT_1}(\ttt^*)$ is not a linear subspace, due to the $\bb_2\bb_2^\top$ term. Therefore, by Theorem \ref{thm:LRT2}, 
the asymptotic distribution of $\lambda_N$ is not $\chi^2$ .
See the blue dotted line in Panel (a) of Figure~\ref{fig:fig1} for the CDF of this asymptotic distribution. This CDF almost overlaps with the empirical CDF of the LRT, suggesting that Theorem~\ref{thm:LRT2} holds here.

\end{example}

\begin{example}{\bf (Exploratory item factor analysis, revisited).}
Now we consider Example~\ref{eg2}(a). Let $\TT,\TT_0,\ttt^*$ and $T_{\TT_0}(\ttt^*)$ be the same as those in Example 5. Let $$
\begin{aligned}
\TT_1 = \left\{ \ttt \in \TT: \theta_{\xx} = \int\int \prod_{j=1}^J \frac{\exp(x_{j}(d_j + a_{j1}\xi_1+a_{j2}\xi_2))}{1+\exp(d_j + a_{j1}\xi_1+a_{j2}\xi_2)} \phi(\xi_1)\phi(\xi_2)d\xi_1d\xi_2, a_{12} = 0,\xx \in\Gamma_J \right\}
\end{aligned}
$$ be the parameter space for the two-factor model.
Recall $\ff_{\xx}$ and $\gggg_{\xx}$ as defined in Example 5. For any $\xx \in\Gamma_J,$
we further define $\HH_{\xx} = (h_{rs}(\xx))_{J\times J},$ where $$
\begin{aligned}
h_{rs}(\xx) =&
\int \prod_{j=1}^J \frac{\exp(x_{j}(d^*_{j} + a^*_{j1}\xi_1))}{1+\exp(d^*_{j} + a^*_{j1}\xi_1)} \left[ x_r - \frac{\exp(d^*_{r} + a^*_{r1}\xi_1)}{1+\exp(d^*_{r} + a^*_{r1}\xi_1)} \right] \\
& \times \left[ x_s - \frac{\exp(d^*_{s} + a^*_{s1}\xi_1)}{1+\exp(d^*_{s} + a^*_{s1}\xi_1)} \right] \phi(\xi_1)d\xi_1
\end{aligned}
$$ for $r\neq s,$ and
\begin{equation*}
\begin{aligned}
h_{rr}(\xx) =& \int \prod_{j=1}^J \frac{\exp(x_{j}(d^*_{j} + a^*_{j1}\xi_1))}{1+\exp(d^*_{j} + a^*_{j1}\xi_1)} \left\{ \left[ x_r - \frac{\exp(d^*_{r} + a^*_{r1}\xi_1)}{1+\exp(d^*_{r} + a^*_{r1}\xi_1)} \right]^2 \right. \\
& \left.- \frac{\exp(d_r^* + a_{r1}^*\xi_1)}{(1+\exp(d_r^* + a_{r1}^*\xi_1))^2} \right\} \phi(\xi_1)d\xi_1.
\end{aligned}
\end{equation*}
Then, the tangent cone of $\TT_1$ at $\ttt^*$ is
\begin{equation}
T_{\TT_1}(\ttt^*) =  \left\{ \ttt=\{\theta_{\xx}\}_{\xx\in\Gamma_J}: \theta_{\xx} =
\bb_0^\top \ff_{\xx} + \bb_1^\top \gggg_{\xx} + \bb_2^\top \HH_{\xx}\bb_2,~ \bb_0,\bb_1,\bb_2 \in \R^J, b_{12} = 0
\right\}.
\end{equation}
Similar to Example 7, $T_{\TT_1}(\ttt^*)$ is not a linear subspace and thus $\lambda_N$ is not asymptotically $\chi^2$ .
In Panel (a) of Figure~\ref{fig:fig2}, the asymptotic CDF suggested by Theorem \ref{thm:LRT2} is shown as the blue dotted line. Similar to the previously examples, this CDF is very close to the empirical CDF of the LRT.


\end{example}

\section{Discussion}\label{sec:conc}

In this note, we point out how the regularity conditions of Wilks' theorem may be violated, using three examples of models with latent variables.
In these cases, the asymptotic distribution of the LRT statistic is no longer  $\chi^2$  and therefore the test may no longer be valid. It seems that the regularity conditions of Wilks' theorem, especially the requirement on a non-singular
Fisher information matrix, have not received enough attention. As a result, the LRT is often misused. Although we focus on LRT, it is  worth pointing out that other testing procedures, including the Wald and score tests, as well as limited-information tests (e.g., tests based on bivariate information),
require similar regularity conditions and thus may also be affected.

We present a general theory for LRT first established in \cite{chernoff1954distribution} that is not widely known in psychometrics and related fields. As we illustrate by the three examples, this theory applies to irregular cases not covered by Wilks' theorem. There are other examples for which this general theory is useful. For example, Examples 1(a) and 2(a) can be easily generalized to the comparison of factor models with different numbers of factors, under both confirmatory and exploratory settings. This theory can also be applied to model comparison in latent class analysis that also suffers from a non-invertible information matrix.
To apply the theorem, the key is to choose a suitable parameter space and then
characterize the tangent cone at the true model.

There are alternative inference methods for making statistical inference under such irregular situations.
One method is to obtain a reference distribution for LRT via parametric bootstrap. Under the same regularity conditions as in Theorem~\ref{thm:LRT2}, we believe that the parametric bootstrap is still consistent.
The parametric bootstrap may even achieve better approximation accuracy for finite sample data than the asymptotic distributions given by Theorems \ref{thm:LRT} and \ref{thm:LRT2}. However, for complex latent variable models (e.g., IFA models with many factors), the parametric bootstrap may be computationally intensive, due to the high computational cost of repeatedly computing the marginal maximum likelihood estimators.
On the other hand, Monte Carlo simulation of the asymptotic distribution in Theorem~\ref{thm:LRT2} is computationally much easier, even though there are still optimizations to be solved. Another method is the split likelihood ratio test recently proposed by \cite{wasserman2020universal} that is computationally fast and does not suffer from singularity or boundary issues. By making use of a sample splitting trick, this split LRT is able to control the type I error at any pre-specified level. However, it may  be quite conservative sometimes.

This paper focuses on the situations where the true model is exactly a singular or boundary point
of the parameter space. However, the LRT can also be problematic when the true model is near a singular or boundary point. A recent article by \cite{mitchelletal:2019}  provides a treatment of this problem, where a finite sample approximating distribution is derived for LRT.

Besides the singularity and boundary issues, the asymptotic distribution may be inaccurate when the dimension of the parameter space is relatively high comparing with the sample size. This problem has been intensively studied in statistics and
a famous result is the Bartlett correction which provides a way to improve the $\chi^2$  approximation \citep{bartlett1937properties,bickel1990decomposition,cordeiro1983improved,box1949general,lawley1956general,wald1943tests}.
When the regularity conditions do not hold, the classical form of Bartlett correction may no longer be suitable. A general form of Bartlett correction remains to be developed, which is left for future investigation.

\bigskip\bigskip\bigskip

\newenvironment{proof}[1][Proof]{\noindent\textbf{#1.} }{\ \rule{0.5em}{0.5em}}

\appendix
\noindent
{\Large Appendix}

\begin{proof}[Proof of Lemma \ref{lemma:lemma1}]
Denote the $(i,j)$-entry of the Fisher-information matrix $I(\ttt^*)$ as $q_{ij}.$
In both cases, we show that $q_{ij} = 0$ for $i \geq 2J+1,$ or $j \geq 2J+1,$ and therefore $I({\ttt^*})$ is non-invertible. Since $$
q_{ij} = \int \frac{\partial \log p_{\ttt}\left(\x\right) }{\partial \theta_i} \Bigr|_{\ttt^*}\frac{\partial \log p_{\ttt}\left(\x\right) }{\partial \theta_j}\Bigr|_{\ttt^*} p_{\ttt^*}(\x)d\x,
$$ it suffices to show that $$
\frac{\partial \log p_{\ttt}\left(\x\right) }{\partial \theta_i}\Bigr|_{\ttt^*} = 0, \quad j \geq 2J+1.
$$

In the case of two-factor model, it suffices to show that $$
\frac{\partial \log p_{\ttt}\left(\x\right) }{\partial a_{l2}} \Bigr|_{\ttt^*} = 0,
$$ for $l=2,...,J.$ Let $\sigma_{ij}$ be the $(i,j)$-entry of the covariance matrix $\Sigma$ and it is easy to see that $\sigma_{ij} = a_{i1}a_{j1}+a_{i2}a_{j2}+1_{\{i=j\}}\delta_i,$ where $a_{12} = 0.$ By the chain rule, $$
\frac{\partial \log p_{\ttt}\left(\x\right) }{\partial a_{l2}} = \sum_{i\leq j} \frac{\partial \log p_{\ttt}\left(\x\right) }{\partial \sigma_{ij}} \frac{\partial \sigma_{ij}}{\partial a_{l2}}.
$$ Since $$\begin{aligned}
\frac{\partial \sigma_{ij}}{\partial a_{l2}} \Bigr|_{\ttt^*} &= 1_{\{l=i\}}a^*_{j2} + 1_{\{l=j\}}a^*_{i2}\\
&= 0,
\end{aligned}
$$ then $I(\ttt^*)$ is non-invertible in the case of two-factor model.

In the case of two-factor IFA model, since $$
\frac{\partial \log p_{\ttt}\left(\x\right) }{\partial \theta_i} = \frac{1}{p_{\ttt}(\x)}\frac{\partial p_{\ttt}\left(\x\right) }{\partial a_{l2}}
$$ it suffices to show that $$
\frac{\partial p_{\ttt}\left(\x\right) }{\partial a_{l2}} \Bigr|_{\ttt^*} = 0,
$$ for $l=2,...,J.$ Since $$\begin{aligned}
\frac{\partial p_{\ttt}\left(\x\right) }{\partial a_{l2}} \Bigr|_{\ttt^*} &= \int\int \prod_{j=1}^J \frac{\exp(x_{j}(d^*_{j} + a^*_{j1}\xi_1))}{1+\exp(d^*_{j} + a^*_{j1}\xi_1)} \left[ x_l - \frac{\exp(d^*_{l} + a^*_{l1}\xi_1)}{1+\exp(d^*_{l} + a^*_{l1}\xi_1)} \right] \xi_2\phi(\xi_1)\phi(\xi_2) d\xi_1d\xi_2\\
&= \int \xi_2 \phi(\xi_2) d\xi_2 \times
\int \prod_{j=1}^J \frac{\exp(x_{j}(d^*_{j} + a^*_{j1}\xi_1))}{1+\exp(d^*_{j} + a^*_{j1}\xi_1)} \left[ x_l - \frac{\exp(d^*_{l} + a^*_{l1}\xi_1)}{1+\exp(d^*_{l} + a^*_{l1}\xi_1)} \right] \phi(\xi_1) d\xi_1 \\
&=0,
\end{aligned}
$$ then $I(\ttt^*)$ is non-invertible in the case of two-factor IFA model.
\end{proof}

\begin{proof}[Proof of Theorem \ref{thm:LRT}]
We refer readers to \cite{drton2009likelihood}, Theorem 2.6.
\end{proof}

\begin{proof}[Proof of Theorem \ref{thm:LRT2}]
The proof is similar to that of Theorem 16.7, \cite{van2000asymptotic}.
We only state the main steps and skip the details which readers can find in \cite{van2000asymptotic}.

We introduce some notations. Let $$T_{N,0} = \left\{ \sqrt{N}(\ttt-\ttt^*): \ttt\in \TT_0 \right\}$$ and $$T_{N,1} = \left\{ \sqrt{N}(\ttt-\ttt^*): \ttt\in \TT_1 \right\}.
$$ Under conditions C5 and C7, $T_{N,0}, T_{N,1}$ converge to $T_{\TT_0}(\ttt^*)$ and $T_{\TT_1}(\ttt^*),$ respectively in the sense of \cite{van2000asymptotic}. Let $I(\ttt^*)^{-\frac{1}{2}}$ denote the inverse of $I(\ttt^*)^{\frac{1}{2}}.$
Let $\G_N = \sqrt{N}(\PPP_N - P_{\ttt^*})$ be the empirical process.
Then,
\begin{align}
\lambda_N & = 2\sup_{\ttt\in \TT_1}l_{N}(\ttt) - 2\sup_{\ttt\in \TT_0}l_{N}(\ttt) \nonumber\\
&= 2\sup_{\hh\in T_{N,1}} N\PPP_N \log p_{\ttt^*+\hh/\sqrt{N}}(\x) - 2\sup_{\hh\in T_{N,0}} N\PPP_N \log p_{\ttt^*+\hh/\sqrt{N}}(\x) \nonumber\\
&= 2\sup_{\hh\in T_{N,1}} N\PPP_N \log \frac{p_{\ttt^*+\hh/\sqrt{N}}(\x)}{p_{\ttt^*}(\x)} - 2\sup_{\hh\in T_{N,0}} N\PPP_N \log \frac{p_{\ttt^*+\hh/\sqrt{N}}(\x)}{p_{\ttt^*}(\x)}\nonumber\\
&= 2\sup_{\hh\in T_{N,1}} \left(\hh^\top\G_N\dot l_{\ttt^*} - \frac{1}{2}\hh^\top I(\ttt^*)\hh\right) - 2\sup_{\hh\in T_{N,0}} \left(\hh^\top\G_N\dot l_{\ttt^*} - \frac{1}{2}\hh^\top I(\ttt^*)\hh\right) + o_p(1) \label{eq:tmp1}\\
&= \sup_{\hh\in T_{\TT_0}(\ttt^*)}\left\| I(\ttt^*)^{-\frac{1}{2}}\G_N\dot l_{\ttt^*} - I(\ttt^*)^{\frac{1}{2}}\hh \right\|^2 - \sup_{\hh\in T_{\TT_1}(\ttt^*)}\left\| I(\ttt^*)^{-\frac{1}{2}}\G_N\dot l_{\ttt^*} - I(\ttt^*)^{\frac{1}{2}}\hh \right\|^2 + o_p(1)\label{eq:tmp2}.
\end{align}
The $\dot l_{\ttt^*}$ is defined by condition C2.
For details of \eqref{eq:tmp1}, see the proof of Theorem 16.7,   \cite{van2000asymptotic}. \eqref{eq:tmp2} is derived from $$
2\hh^\top\G_N\dot l_{\ttt^*} - \hh^\top I(\ttt^*)\hh = - \left\| I(\ttt^*)^{-\frac{1}{2}}\G_N\dot l_{\ttt^*} - I(\ttt^*)^{\frac{1}{2}}\hh \right\|^2 + \left\| I(\ttt^*)^{-\frac{1}{2}} \G_N\dot l_{\ttt^*}\right\|^2,
$$ and the fact that $T_{N,0}, T_{N,1}$ converge to $T_{\TT_0}(\ttt^*)$ and $T_{\TT_1}(\ttt^*),$ respectively.
By central limit theorem, $I(\ttt^*)^{-\frac{1}{2}}\G_N\dot l_{\ttt^*}$ converges to $k$-variate standard normal distribution. We complete the proof by continuous mapping theorem.
\end{proof}

\bibliographystyle{apacite}
\bibliography{ref}

\end{document}